\newcommand{\PP}{{\mathbb{P}}}
\newcommand{\QQ}{{\mathbb{Q}}}
\newcommand{\ZZ}{{\mathbb{Z}}}
\newcommand{\calA}{{\mathcal A}}
\newcommand{\calM}{{\mathcal M}}
\newcommand{\calJ}{{\mathcal J}}
\newcommand{\calX}{{\mathcal X}}
\newcommand{\op}{\operatorname}
\newcommand{\Pic}{\op{Pic}}
\newcommand{\PPic}{\op{{\mathcal P}ic}}
\newcommand{\Mgn}{{\calM_{g,n}}}
\newcommand{\Mgnct}{{\calM_{g,n}^{ct}}}
\newcommand{\oMgn}{{\overline{\calM}_{g,n}}}
\newcommand{\Jg}{{\mathcal J}_g}
\newcommand{\ud}{{\underline{d}}}
\newcommand{\Ell}{{\mathcal E}}
\theoremstyle{plain}
\newtheorem{thm}{Theorem}%[section]
\newtheorem{prop}[thm]{Proposition}
\newtheorem{cor}[thm]{Corollary}
\theoremstyle{definition}
\begin{document}
\title{The double ramification cycle and the theta divisor}
\author{Samuel Grushevsky}
\address{Mathematics Department, Stony Brook University, Stony Brook, NY 11794-3651, USA.}
\email{sam@math.sunysb.edu}
\thanks{Research of the first author is supported in part by National Science Foundation under the grant DMS-10-53313.}
\author{Dmitry Zakharov}
\address{Mathematics Department, Stony Brook University, Stony Brook, NY 11794-3651, USA.}
\email{dvzakharov@gmail.com}

\begin{abstract}

We compute the classes of universal theta divisors of degrees zero and $g-1$ over the Deligne-Mumford compactification $\oMgn$ of the moduli space of curves, with various integer weights on the points, in particular reproving a recent result of M\"uller \cite{muller}.

We also obtain a formula for the class in $CH^{g}(\Mgnct)$ (moduli of stable curves of compact type) of the double ramification cycle, given by the condition that a fixed linear combination of the marked points is a principal divisor, reproving a recent result of Hain \cite{hainnormal}.

Our approach for computing the theta divisor is more direct, via test curves and the geometry of the theta divisor, and works easily over all of $\oMgn$. We used our extended result in \cite{futuregz} to study the partial compactification of the double ramification cycle.

\end{abstract}

\maketitle

\section{Introduction}

Let $\Mgn$ denote the moduli space of smooth genus $g$ curves with $n$ labeled distinct marked points, let $\Mgnct$ denote its partial compactification by stable curves of compact type, and let $\oMgn$ denote the Deligne-Mumford compactification by stable curves. Let $\Jg^d\rightarrow \calM_g^{ct}$ denote the universal family of Picard varieties (Jacobians) of degree $d$ (recall that the Jacobian of a stable curve of compact type is in fact an abelian variety), and by abuse of notation let $\Jg^d\rightarrow \Mgnct$ also denote its pullback under the forgetful map $\pi:\Mgnct\rightarrow\calM_g^{ct}$. For any collection of integers $\ud=(d_1,\ldots,d_n)\in\ZZ^n$ of total degree $\deg\ud:=d_1+\ldots+d_n=d$, define the section $s_\ud:\calM_{g,n}\to\Jg^d$ by $s_\ud(C,p_1,\ldots,p_n)=\sum d_i p_i\in \Pic^d(C)$.

The degree $g-1$ universal Picard variety has the universal theta divisor $\Theta\subset\Jg^{g-1}$, while the degree $0$ universal Picard variety has the zero section $z_g:\calM_g^{ct}\rightarrow \Jg^0$. We denote by $T$ the universal symmetric theta divisor $T\subset \calJ_g^0$ trivialized along the zero section. In this paper, we compute the pullbacks of $\Theta$ and $T$ to $\oMgn$.

Our main result is given in Theorem~\ref{thm:main}.

Our motivation for computing the pullbacks of the theta divisors is that they can be used to compute the classes of natural geometric loci on the moduli space of curves, which have been studied recently. One example is a result proved (in cohomology) by Hain in \cite{hainnormal} (see Section~2 for notation):
\begin{thm}\label{thm:0}
For $\deg\ud=0$, the class in $CH^{g}(\Mgnct,\QQ)$ of the pullback of the zero section of the universal Jacobian variety $\calJ_g^0$ is equal to
$$
[s_\ud^*z_g]=\frac{1}{g!}\left[\frac{1}{2}\displaystyle\sum_{i=1}^n d_i^2 K_i-\frac{1}{2}\displaystyle\sum_{P\subseteq I}
\left(d_P^2-\displaystyle\sum_{i\in P}d_i^2\right)\delta_0^P-\frac{1}{2}
\displaystyle\sum_{h> 0,P\subseteq I} d_P^2
\delta_h^P\right]^g,
$$
where $d_P=\sum_{i\in P} d_i$.
\end{thm}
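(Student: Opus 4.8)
The plan is to deduce the degree-$g$ formula from the divisor computation of Theorem~\ref{thm:main} through the single identity
\[
[s_\ud^* z_g]=\frac{1}{g!}\bigl(s_\ud^* T\bigr)^g\in CH^g(\Mgnct,\QQ),
\]
after which one only has to raise the explicit divisor class $s_\ud^* T$ to the $g$-th power. Geometrically $s_\ud^* z_g$ is the locus where $\sum d_i p_i$ is trivial in $\Pic^0(C)$, the double ramification locus, and the identity is the universal incarnation of the elementary fact that on a principally polarized abelian variety of dimension $g$ the origin is a point of class $\tfrac{1}{g!}[\Theta]^g$, since $\Theta^g=g!$.

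To set up the identity I would write $\theta:=[T]\in CH^1(\calJ_g^0)$ and record the two defining features of the universal symmetric theta divisor: $[-1]^*\theta=\theta$ and $z_g^*\theta=0$. Both $z_g$ and $s_\ud$ are sections of the abelian scheme $\pi:\calJ_g^0\to\calM_g^{ct}$, pulled back to $\Mgnct$, so $s_\ud^*(z_{g,*}[\Mgnct])$ and $\tfrac{1}{g!}(s_\ud^*\theta)^g$ are both classes on $\Mgnct$. On every fiber $A$ the cycle $z_g$ restricts to the origin while $\tfrac{1}{g!}\theta^g$ restricts to $\tfrac{1}{g!}[\Theta]^g=[\mathrm{pt}]$, so the two codimension-$g$ classes $z_{g,*}[\Mgnct]$ and $\tfrac{1}{g!}\theta^g$ agree on each fiber. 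Their difference therefore lies in the positive part of the Leray filtration and is a sum of terms $\theta^j\cdot\pi^*\beta_j$ with $j<g$; pulling back along $s_\ud$ turns these into $(s_\ud^*\theta)^j\beta_j$. The identity amounts to the vanishing of this correction, which I would extract from $z_g^*\theta=0$ together with the symmetry $[-1]^*\theta=\theta$, working in cohomology so as to land in the setting of Hain's theorem.

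Granting the identity, the proof concludes immediately: Theorem~\ref{thm:main} supplies
\[
s_\ud^* T=\frac{1}{2}\sum_{i=1}^n d_i^2 K_i-\frac{1}{2}\sum_{P\subseteq I}\Bigl(d_P^2-\sum_{i\in P}d_i^2\Bigr)\delta_0^P-\frac{1}{2}\sum_{h>0,\,P\subseteq I}d_P^2\,\delta_h^P,
\]
and raising this divisor to the $g$-th power and dividing by $g!$ produces exactly the stated expression. I should note that the quadratic dependence on $\ud$ visible here is itself forced by the theorem of the cube applied to the additive decomposition $s_\ud=\sum_i d_i[p_i]$, which makes $s_\ud^*\theta$ a quadratic form in the weights $d_i$; this is the structural reason the main theorem takes this shape and the only input I would borrow from it.

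I expect the genuine obstacle to be the vanishing of the correction terms $\theta^j\cdot\pi^*\beta_j$ after restriction to the section $s_\ud$. On a single abelian variety all points are algebraically, hence homologically, equivalent, so the fiberwise identity $[z_g]=\tfrac{1}{g!}\theta^g$ is free; promoting it to the universal family is where the content lies, since the $\beta_j$ are a priori nonzero for $g\ge 2$. I would control them either by a Fourier--Mukai/Beauville-decomposition argument isolating the weight-$0$ part, or, in keeping with the paper's philosophy, by a direct test-curve verification on suitable one-parameter families degenerating into the boundary, checking that the symmetry of $T$ and the normalization $z_g^*\theta=0$ kill every lower-order contribution.
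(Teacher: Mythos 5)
You take essentially the same route as the paper: its entire proof of Theorem~\ref{thm:0} consists of restricting the divisor formula (\ref{eq:formulaforT}) for $[s_\ud^*T]$ from Theorem~\ref{thm:main} to $\Mgnct$ and invoking the identity (\ref{equ:0section}), $[z_g]=[T]^g/g!$ in $CH^g(\calX_g,\QQ)$ --- exactly your plan. The one real difference is that the paper does not reprove (\ref{equ:0section}): it cites the multiplicative decomposition of $Rp_*\QQ$ for abelian schemes due to Deninger and Murre \cite{denmur} (see \cite{bila}, \cite{voisinnotes}) for the Chow-level statement, noting Hain's independent cohomological proof \cite{hainnormal}, whereas you sketch a proof. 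Two cautions about that sketch. First, since the theorem asserts an equality in $CH^g(\Mgnct,\QQ)$, your stated intention of ``working in cohomology so as to land in the setting of Hain's theorem'' proves strictly less than the claim; the Fourier--Mukai/Beauville decomposition you offer only as a fallback is in fact the necessary input, and it is precisely the Deninger--Murre result the paper cites. Second, your description of the discrepancy $z_{g,*}[\Mgnct]-\tfrac{1}{g!}\theta^g$ as a sum $\sum_{j<g}\theta^j\cdot\pi^*\beta_j$ is unjustified: the Leray graded pieces of $H^{2g}$ are $H^p(\calM,R^{2g-p}p_*\QQ)$, which are not generated by powers of $\theta$ with pulled-back coefficients (odd local systems contribute), and the correct bookkeeping is via the Beauville components $z_{(s)}$ with $[N]^*z_{(s)}=N^{2g-s}z_{(s)}$, whose vanishing for $s>0$ is what the multiplicative decomposition delivers. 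Relatedly, the proposed ``direct test-curve verification'' of the vanishing cannot work: after pullback the corrections lie in $CH^g(\Mgnct)$, and one-dimensional test families detect only divisor classes (and at best numerical, not Chow-theoretic, equalities). With (\ref{equ:0section}) granted, the remainder of your argument --- $s_\ud^*[z_g]=(s_\ud^*[T])^g/g!$ together with substitution of (\ref{eq:formulaforT}), whose $\lambda_1$ and $\delta_{irr}$ coefficients vanish --- coincides with the paper's proof.
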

The cycle $s^*_\ud z_g$ is known as the {\it double ramification cycle}, and the question of computing its class is due to Eliashberg. Geometrically, it can be interpreted as the locus of curves admitting a map to $\PP^1$ with ramification multiplicities being all the positive $d_i$ over $0$, and all the negative $d_i$ over $\infty$.

The relationship of the double ramification cycle to the pullback of the theta divisor is as follows. Let $p:\calX_g\rightarrow\calA_g$ denote the universal family of principally polarized abelian varieties, let $z_g$ denote the zero section of this family, and let $T\subset\calX_g$ denote the universal symmetric theta divisor trivialized along the zero section. Then
\begin{equation}
[z_g]=\frac{[T]^g}{g!}\in CH^{g}(\calX_g,\QQ)\label{equ:0section}.
\end{equation}
Note that the pushforwards $p_*([T]^k)$ were considered by Mumford and studied in detail by van der Geer \cite{vdgeercycles}, but (\ref{equ:0section}), in the Chow ring, follows from the existence of a multiplicative decomposition for $Rp_*\QQ$, proved by Deninger and Murre \cite{denmur} (see eg.~\cite[Cor.~16.5.7]{bila}, or \cite[Prop.~4.3.6, Cor.~4.3.9]{voisinnotes}). In cohomology (\ref{equ:0section}) was proven independently by
Hain {\cite[Prop.~8.1]{hainnormal}}.

\smallskip
A closely related result is the following very recent theorem of M\"uller \cite{muller}, which is nearly equivalent to computing the pullback of the degree $g-1$ theta divisor (our notation is slightly different, see Section~2 for details):

\begin{thm}\label{thm:muller}
For any $\ud$ with $\deg \ud=g-1$, and such that at least one $d_i$ is negative, define the following locus in $\calM_{g,n}$:
$$
 D_{\ud}=\left\{(C,p_1,\ldots,p_n)\in\calM_{g,n}\mid h^0(C,d_1p_1+\cdots+d_np_n)\geq 1\right\},
$$
and let $\overline{D}_{\ud}$ denote its closure in $\oMgn$. Then in $\Pic_\QQ(\oMgn)$ the class of $\overline{D}_{\ud}$ is equal to
$$
  [\overline{D}_{\ud}]=-\lambda_1+0\cdot\delta_{irr}+\frac{1}{2}\displaystyle\sum_{i=1}^n d_i(d_i+1) K_i-\frac{1}{2}\displaystyle\sum_{P\subseteq I}\left(d_P^2-\displaystyle\sum_{i\in P}d_i^2\right)\delta_0^P-\\
$$
$$
  -\frac{1}{2}\displaystyle\sum_{h> 0,P\subseteq I} (d_P-h)(d_P-h+1)\delta_h^P-
\displaystyle\sum_{P\subseteq P_+,h>d_P}(h-d_P)\delta_h^P,
$$
where $P_+:=\{i\in I \mid d_i\geq 0\}$.
\end{thm}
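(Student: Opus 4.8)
The plan is to realize $\overline{D}_{\ud}$ as a pullback of the degree $g-1$ universal theta divisor and to determine its class one coefficient at a time against the standard generators $\lambda_1,K_1,\dots,K_n,\delta_{irr},\delta_h^P$ of $\Pic_\QQ(\oMgn)$. Over the interior $\Mgn$ the locus $D_{\ud}$ is exactly $s_\ud^{-1}(\Theta)$: the tautological line bundle $L_\ud=\calO(\sum d_i\sigma_i)$ (with $\sigma_i$ the sections of the universal curve $\pi$) has relative degree $g-1$, hence relative Euler characteristic $0$, so the theta divisor is cut out by the vanishing of $\det R\pi_* L_\ud$, whose zero locus is precisely $\{h^0\ge 1\}$. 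The interior coefficients then come from Grothendieck--Riemann--Roch: writing $\xi=c_1(L_\ud)=\sum d_i[\sigma_i]$ and $K=c_1(\omega_\pi)$, the relevant term of $\op{ch}(L_\ud)\,\op{Td}(\omega_\pi^\vee)$ pushes forward to $\tfrac12\pi_*(\xi^2)-\tfrac12\pi_*(\xi K)+\tfrac1{12}\pi_*(K^2)$. Using $\pi_*(\sigma_i\sigma_j)=0$ for $i\ne j$, $\pi_*(\sigma_i^2)=-K_i$, $\pi_*(\sigma_i K)=K_i$ and $\tfrac1{12}\pi_*(K^2)=\lambda_1$, this gives $-c_1(\det R\pi_* L_\ud)=-\lambda_1+\tfrac12\sum_i d_i(d_i+1)K_i$, matching the $\lambda_1$ and $K_i$ part of the asserted formula.

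The boundary coefficients are where the real work lies, and here I would argue by test curves together with the geometry of the theta divisor. For each boundary divisor I would take a one-parameter family $B\to\oMgn$ meeting it transversally and compute $B\cdot[\overline{D}_{\ud}]$ directly, as the number of points of $B$ over which $\sum d_i p_i$ becomes effective, counted with multiplicity; subtracting off the contributions of the already-known $\lambda_1$ and $K_i$ coefficients isolates the boundary coefficient. For $\delta_{irr}$ the curve acquires a non-separating node, its generalized Jacobian is semiabelian, and the theta divisor extends without extra vanishing, so a test curve through $\delta_{irr}$ should yield coefficient $0$.

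For $\delta_h^P$ with $h>0$ the curve degenerates to $C_1\cup_q C_2$, with $C_1$ of genus $h$ carrying $P$ and $C_2$ of genus $g-h$ carrying $I\setminus P$; the polarized Jacobian splits as $J(C_1)\times J(C_2)$, the bundle $\calO(\sum d_i p_i)$ acquires bidegree $(d_P,\,g-1-d_P)$, and $\Theta$ degenerates into the sum of the two partial theta divisors. The order of vanishing of the theta section along this boundary stratum, read off from the bidegree, should produce the quadratic term $-\tfrac12(d_P-h)(d_P-h+1)\delta_h^P$, namely the interior-type expression for a degree $d_P$ bundle on a genus $h$ component. The case $h=0$ is genuinely different: on a rational tail effectivity of a nonnegative divisor is automatic, so the coefficient is governed entirely by the collision of sections across the node and equals $-\tfrac12(d_P^2-\sum_{i\in P}d_i^2)$, in agreement with the degree zero computation of Theorem~\ref{thm:0}.

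The final and hardest point is the extra term $-\sum_{P\subseteq P_+,\,h>d_P}(h-d_P)\delta_h^P$. When every $d_i$ with $i\in P$ is nonnegative the divisor $\sum_{i\in P}d_i p_i$ is genuinely effective on $C_1$, so $h^0\ge 1$ there for free, and when moreover $h>d_P$ this effectivity is unexpected (of codimension $h-d_P$ in $\Pic^{d_P}(C_1)$), so $s_\ud$ carries the entire stratum $\delta_h^P$ into $\Theta$. The closure $\overline{D}_{\ud}$ of the interior locus then differs from the naive pullback of the closed theta divisor by this stratum, taken with multiplicity equal to the excess $h-d_P$. Making this comparison precise, by identifying the correct extension of $L_\ud$ and of $\Theta$ across the boundary and computing the multiplicity of $\overline{D}_{\ud}$ along each $\delta_h^P$, is the main obstacle; it is exactly what the transverse test-curve count is designed to control. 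Assembling the interior class with these boundary coefficients yields the stated formula.
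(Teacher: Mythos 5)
Your interior computation is correct and is in fact a genuinely different route from the paper for the $\lambda_1$ and $K_i$ coefficients: the paper never invokes Grothendieck--Riemann--Roch, and it obtains the coefficient $-1$ of $\lambda_1$ only at the very end, by an adjunction computation on the universal $\PPic^1$ over $\calA_2$ combined with the relation $\lambda_1=\delta_1^\emptyset/5$ on $\calM_2^{ct}$; your $-c_1(\pi_!L_\ud)$ argument gets both families of coefficients at once. But the proof has genuine gaps exactly at the boundary, and in one place the stated mechanism would fail. For the quadratic terms $-\tfrac12(d_P-h)(d_P-h+1)\delta_h^P$ you propose to read off ``the order of vanishing of the theta section along this boundary stratum'' from the bidegree; at a generic point of $\Delta_h^P$ (when not all weights on one side are nonnegative) the limit bundle does \emph{not} lie in the theta divisor, so this vanishing order is zero and cannot produce the coefficient. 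The paper instead intersects with curves $Z_h^P$ lying \emph{inside} the boundary divisor, where $s_\ud$ becomes an explicit twisted Abel--Jacobi map of the form (\ref{eqn:AbelJacobig-1}), yielding $Z_h^P\cdot[s_\ud^*\Theta]=(d_P-h)^2(g-h)$, and then solves linear algebra against the standard intersection numbers (including the self-intersection term $Z_h^P\cdot\delta_h^P$). Likewise your key multiplicity claim $h-d_P$ along $\Delta_h^P$ with $P\subseteq P_+$, $h>d_P$ is asserted, not proven (as you concede), and the naive guess is wrong: the limit bundle on the genus-$h$ side is $\calO\bigl(\sum_{i\in P}d_ip_i+(h-1-d_P)q_2\bigr)$, a generic effective class of degree $h-1$ with $h^0=1$, so the theta divisor has multiplicity $1$ there, while the contact order of a transverse arc is $h-d_P$ --- the arc is forced to be tangent to the theta divisor, and establishing the order requires the Riemann theta singularity analysis along the degenerating family, which is the real content of this step in the paper and in M\"uller's work.

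Your treatment of $\delta_{irr}$ is also not a proof, and the heuristic behind it is off: the pullback of the actual universal theta function \emph{does} vanish generically along $\Delta_{irr}$, to order $1/8$ (seen from the Fourier--Jacobi expansion, cf.\ the paper's citation of Donagi), which is why $[s_\ud^*\Theta]$ carries $+\tfrac18\delta_{irr}$ and the coefficient in $[\overline{D}_\ud]$ is zero only after this is subtracted. More to the point, extracting this coefficient by a test curve through $\Delta_{irr}$ requires controlling limits of the effectivity condition on curves of non-compact type, which the paper explicitly flags as tricky and deliberately avoids: it derives one relation from $\Ell\cdot[\overline{D}_\ud]=0$, proved by observing that the trivialized theta value is constant and generically nonzero through the elliptic-tail degeneration, and a second relation from the $\calA_2$ adjunction argument mentioned above. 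Since your GRR step already fixes the $\lambda_1$ coefficient, you would need only the $\Ell$ relation to close this gap --- but then you must actually prove $\Ell\cdot[\overline{D}_\ud]=0$, including at the one non-compact-type member of $\Ell$, and your proposal supplies no argument for this; ``the theta divisor extends without extra vanishing'' is precisely what has to be shown.
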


\medskip
Theorem~\ref{thm:0} was first proved by Hain in April 2003, using normal functions. His proof became more widely known (and we became aware of it) in February 2011 with the appearance of Hain's preprint \cite{hainnormal}. Over the locus of curves with rational tails $\calM_{g,n}^{rt}$, this result was reproved by Cavalieri, Marcus, and Wise \cite{cavalieri} in July 2011, using Gromov--Witten theory. We obtained a proof of Theorem~\ref{thm:0} in June 2011, and discussed it with Hain and others at PCMI in July 2011, in particular correcting Hain's original formula, see \cite{hainnormal}. Theorem~\ref{thm:muller} was proved by M\"uller \cite{muller} in March 2012, and we decided to make our results available due to a continued interest in the problem. 

Our method is more elementary, and could also be applied in various similar situations computing classes related to the theta divisor. In particular in our paper \cite{futuregz} we consider degenerations of abelian varieties and prove an extension of Theorem~\ref{thm:0}, and primarily of formula (\ref{equ:0section}), to the universal family of semiabelic varieties of torus rank one, using our computation of the class $[s_\ud^*T]$ on $\oMgn$.
\medskip

We calculate the class of the theta divisor using test curves. In Section~2 we describe a basis of test curves and a basis for $\Pic_\QQ(\oMgn)$, and compute their intersection numbers. This is a standard method, and the calculations are standard, but we briefly summarize them for convenience and for future reference --- the result is given in Proposition~\ref{prop:standardintersect}. The more interesting part is computing the intersections of the test curves with the pullback of the theta divisors $\Theta$ and $T$ under the map $s_\ud$, which is done by using the properties of the theta function and the Abel--Jacobi map. This is the content of Proposition~\ref{prop:thetaintersect}. Finally, in Theorem~\ref{thm:main} we derive the formulas for the pullbacks of the theta divisors.

We follow the standard notation and conventions for working on the moduli of curves, referring for example to \cite{mumfordtowards},\cite{hamobook},\cite{acgh2} for known results, discussion, and further references.

\section{Divisors, test curves, and intersection numbers}

In this section we describe a basis of divisor classes on $\oMgn$ and a collection of test curves, and we compute their intersection numbers. Our computation technique is quite standard, but we include it for the sake of completeness and for possible reference value.

Let $\pi:\oMgn\to\overline{\calM}_g$ be the forgetful map and let $\pi_i:\oMgn\to\overline{\calM}_{g,1}$ be the map that forgets all but $i$-th marked point. Let $I=\{1,2,\ldots,n\}$ denote the indexing set. For a subset $P\subseteq I$ let $P^c$ denote its complement. We consider the following set of generators for $\Pic_\QQ(\oMgn)$:
\begin{itemize}
\item The classes $K_i=c_1(\pi_i^*(\omega_p))$, where $\omega_p$ is the relative dualizing sheaf of the universal curve $p:\overline{\calM}_{g,1}\rightarrow\overline{\calM}_g$.
\item The classes $\delta_h^P$ of the boundary divisors $\Delta_h^P$, where $P\subseteq I$ and $0\leq h\leq g$. The generic point of $\Delta_h^P$ is a reducible curve consisting of a smooth component of genus $h$ containing the marked points indexed by $P$ and a smooth component of genus $g-h$ with the remaining points, joined at a node. To satisfy the stability condition we assume that $|P|\geq 2$ if $h=0$ and $|P|\leq g-2$ if $h=g$. Note that $\delta_h^P=\delta_{g-h}^{P^c}$. In any sum involving $\delta_h^P$, we assume that each divisor class appears only once, so we either explicitly state which one we are adding or we sum with symmetric coefficients. This convention is used by M\"uller \cite{muller} but not by Hain \cite{hainnormal}. We use $\delta$ to denote the divisor classes on the moduli stack and $\Delta$ to denote the divisors on the coarse moduli space.
\item The class $\delta_{irr}$ of the divisor $\Delta_{irr}$. The generic point of $\Delta_{irr}$ is a smooth curve of genus $g-1$ with two points identified to form a node.
\item The first Chern class $\lambda_1$ of the Hodge bundle.
\end{itemize}

The above classes are known to be a basis of $\Pic_\QQ(\oMgn)$ for $g\ge 3$. In what follows we assume that $g\ge 3$, but our results also hold for $g=1$ and $g=2$ by inspection.

A more common choice of a basis replaces the classes $K_i$ with the classes $\psi_i$ of the cotangent bundles at the marked points $p_i$. We use the classes $K_i$ in our final result, but we use both $K_i$ and $\psi_i$ in intermediate calculations. These two classes differ by a linear combination of boundary classes (see \cite[p.~161]{ardcpic}):
\begin{equation}\label{equ:Kpsi}
\psi_i=K_i+\displaystyle\sum_{P\ni i, |P|\geq 2}\delta_0^P.
\end{equation}
Note that the classes $K_i$ are denoted $\psi_i$ in \cite{hainnormal}, while in \cite{muller} the $\psi_i$ have the same meaning as above.

We now define a collection of test curves on $\oMgn$.
\begin{itemize}
\item {\bf The curves $Z_i$.} Let $1\leq i\leq n$, and fix a generic smooth curve $(C,p_1,\ldots,\widehat{p}_i,\ldots,p_n)$ in $\mathcal{M}_{g,n-1}$. We define the family $Z_i\subset\oMgn$ by letting the point $p_i$ range over $C$. The curve $Z_i$ is isomorphic to $C$.
\item {\bf The curves $Z_h^P$.}  Let $P=\{i_1,\ldots,i_m\}\subseteq I$ be a subset, let $P^c=\{j_1,\ldots,j_{n-m}\}$ be the complement, and let $0\leq h<g$, where we assume that $m\geq 2$ if $h=0$ and $m<n$ if $h=g-1$. Fix a generic smooth curve $(C_1,p_{j_1},\ldots,p_{j_{n-m}})$ in $\calM_{g-h,n-m}$, and define the family $Y_h^P\subset \overline{\calM}_{g-h,n-m+1}$  by adding a point $q_1$ and letting it range over $C_1$. Now fix another generic smooth curve $(C_2,p_{i_1},\ldots, p_{i_m},q_2)\in\calM_{h,m+1}$, and define the family $Z_h^P\subset \oMgn$ by attaching $q_1$ to $q_2$ to form a node. The curves $Y_h^P$ and $Z_h^P$ are both isomorphic to $C_1$.
\item {\bf The curve $\Ell$.} Fix a generic smooth curve $(C_2,p_1,\ldots,p_n,q_2)$ in $\calM_{g-1,n+1}$, and let $\Ell$ be the family obtained by attaching a varying elliptic curve $(C_1,q_1)\in\overline{\calM}_{1,1}$ to the curve $C_2$ at $q_2$. We consider $\Ell$ to be ``stacky'', i.e.~since $(C_1,q_1)$ has an involution, we consider the generic point of $\Ell$ with coefficient $1/2$.
\item {\bf The curve $Z_{irr}$.} Fix a generic smooth curve $(C_1,p_1,\ldots,p_n,q_1)$ in $\calM_{g-2,n+1}$, and consider, for a fixed generic elliptic curve $(E,q_2)$ in $\calM_{1,1}$, the family $Z_{irr}$ obtained by varying a point $q_3$ over $E$, and attaching $q_1,q_2,q_3$ to a rational curve. The curve $Z_{irr}$ is isomorphic to $E$.
\end{itemize}

We now compute, in the standard way, the intersection numbers of these test curves with the chosen basis of divisors, which will then imply that these curves form a basis for $N_1(\oMgn)$.
\begin{prop} \label{prop:standardintersect}
The test curves have the following intersection numbers with the divisors, where we write $(P,h)=(Q,l)$ if $P=Q$ and $h=l$ or if $P^c=Q$ and $g-h=l$.
$$
Z_i\cdot K_j=
\left\{\begin{array}{cc}\!\! 2g-2, & i=j,\\ 0, &\!\!\! \mbox{otherwise},\end{array}\right.
\ \ Z_i\cdot\delta_h^P=\left\{\begin{array}{cc} 1, & \!\!(P,h)=(\{i,j\},0),j\neq i\\
0, & \mbox{otherwise,}\end{array}\right.
$$
$$
Z_i\cdot\delta_{irr}=0, \quad Z_i\cdot\lambda_1=0,\quad
Z_h^P\cdot K_i=\left\{\begin{array}{cc} 2g-2, & h=0\mbox{ and }i\in P,\\ 1, & h>0
\mbox{ and }i \notin P, \\
0, & \mbox{otherwise},\end{array}\right.
$$
$$
Z_h^P\cdot\delta_l^Q =\left\{\begin{array}{cc} 2-2(g-h)-|P^c|, & (Q,l)=(P,h),\\
1, & (Q,l)=(P\sqcup \{j\},h),j\in P^c,\\0, & \mbox{otherwise,}\end{array}\right.
$$
$$
Z_h^P\cdot\delta_{irr}=0,\ \ Z_h^P\cdot\lambda_1=0,\ \ \Ell\cdot K_i=0, 
\ \ \Ell\cdot\delta_h^P=\left\{\begin{array}{cc}\!\! -1/24, &\!\! (P,h)=(\emptyset,1),\\
0, & \mbox{otherwise,}\end{array}\right.
$$
$$
\Ell\cdot\delta_{irr}=1/2,\quad
\Ell\cdot\lambda_1=1/24,\quad Z_{irr}\cdot \delta_h^P=\left\{\begin{array}{cc} 1, & (P,h)=(\emptyset,1),\\
0, & \mbox{otherwise,}\end{array}\right.
$$
$$
Z_{irr}\cdot K_i=0,\quad Z_{irr}\cdot \delta_{irr}=-1,\quad Z_{irr}\cdot \lambda_1=0.
$$
\end{prop}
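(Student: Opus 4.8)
The plan is to compute each intersection number by analyzing how the corresponding test curve meets each boundary divisor and the tautological classes, exploiting the fact that every test curve is built by varying a single geometric parameter (the position of a moving point, or the modulus of a varying elliptic factor) over a base curve with everything else held generic. The overarching principle is that a generic test curve will meet a boundary divisor $\Delta_h^P$ transversally, and only in finitely many points corresponding to the moving data degenerating into that boundary stratum; so each intersection number is simply a count of such degenerations, with signs and stacky factors recorded carefully. For the tautological classes I would use the projection formula together with the defining descriptions $K_i=c_1(\pi_i^*\omega_p)$ and $\lambda_1=c_1$ of the Hodge bundle, reducing each to a degree computation on the moving base curve.

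First I would handle the curves $Z_i$, where only $p_i$ moves over a fixed generic $C$. Here $Z_i\cong C$, and the restriction of $\pi_i^*\omega_p$ is just $\omega_C$, giving $Z_i\cdot K_i=\deg\omega_C=2g-2$ and $Z_i\cdot K_j=0$ for $j\neq i$ since those points stay fixed. The only boundary met is $\Delta_0^{\{i,j\}}$, when $p_i$ collides with a fixed $p_j$; this happens once per $j\neq i$ and transversally, giving intersection $1$. No irreducible nodal or positive-genus degeneration occurs, and $\lambda_1$ restricts trivially because the generic fiber of the Hodge bundle is constant as only a marked point moves, so $Z_i\cdot\delta_{irr}=Z_i\cdot\lambda_1=0$. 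Next I would treat $Z_h^P$, the most involved case: here $q_1$ moves over $C_1$ while $C_2$ is glued on along $q_2$. The self-intersection-type number $Z_h^P\cdot\delta_h^P$ is the interesting one and equals the degree of the normal bundle to the boundary divisor restricted to the family, which is $-\psi_{q_1}-\psi_{q_2}$ on the node; computing $\deg(-\psi_{q_1})=-(2(g-h)-2+|P^c|+1)$ on the $(g-h)$-side and noting $\psi_{q_2}$ has degree zero (the $C_2$-side is fixed) yields the stated $2-2(g-h)-|P^c|$. The remaining positive intersections $Z_h^P\cdot\delta_{h}^{P\sqcup\{j\}}=1$ record $q_1$ colliding with a point $p_j$, $j\in P^c$, on the moving component, again transversal single collisions. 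The $K_i$ values follow from whether $p_i$ sits on the moving genus-$(g-h)$ component ($i\notin P$, contributing degree $1$ as that point is fixed while the component varies---actually this follows from $K_i$ restricting to a point class) or on the fixed genus-$h$ component with $h=0$ forcing a $\PP^1$ rescaling that accounts for $2g-2$; these are standard and I would verify them by restricting $\pi_i^*\omega_p$ stratum by stratum.

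For the elliptic and irreducible curves I would use the well-known intersection theory of $\overline{\calM}_{1,1}$. The curve $\Ell$ varies $(C_1,q_1)$ over $\overline{\calM}_{1,1}$, so all intersection numbers are pulled back from the $j$-line: $\Ell\cdot\lambda_1=\deg\lambda_1|_{\overline{\calM}_{1,1}}=1/24$, $\Ell\cdot\delta_{irr}=\deg\delta_{irr}|_{\overline{\calM}_{1,1}}=1/2$ (the nodal cubic appears with the stacky $1/2$), and the sole boundary divisor of $\oMgn$ met is $\Delta_1^\emptyset$, entered when the elliptic factor becomes nodal; accounting for the automorphism this gives $-1/24$, consistent with the relation $\lambda_1=\delta_{irr}/12$ on $\overline{\calM}_{1,1}$ combined with the stacky conventions. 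The marked points are all fixed on $C_2$, so $\Ell\cdot K_i=0$. The curve $Z_{irr}$ varies $q_3$ over a fixed elliptic $E$ attached through a rational bridge, so $Z_{irr}\cong E$; it meets $\Delta_1^\emptyset$ once (when the configuration degenerates appropriately) and has $Z_{irr}\cdot\delta_{irr}=-1$ as the self-intersection of the node-smoothing deformation, which I would compute as the degree of the relevant $-\psi-\psi$ bundle along $E$, while $K_i$ and $\lambda_1$ vanish since the points and Hodge structure on the genus-$(g-2)$ part are fixed. The main obstacle will be getting the signs and the stacky factors of $1/2$ and $1/24$ exactly right in the boundary self-intersection numbers $Z_h^P\cdot\delta_h^P$, $\Ell\cdot\delta_1^\emptyset$, and $Z_{irr}\cdot\delta_{irr}$, since these are the negative normal-bundle degrees where an off-by-one in the count of special points or a misattributed automorphism factor would propagate into the final theta divisor formula; I would pin these down by the standard excess-intersection computation on the boundary stratum, cross-checking against the known relations on $\overline{\calM}_{1,1}$.
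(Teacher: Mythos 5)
Your overall strategy---transversal counts for the honest boundary crossings, excess/normal-bundle computations of the form $-\psi-\psi$ where a test curve lies \emph{inside} a boundary divisor, and the standard degrees $\deg\lambda_1=1/24$, $\deg\delta_{irr}=1/2$ on $\overline{\calM}_{1,1}$ with the stacky factor for $\Ell$---is exactly the paper's, which implements the same normal-bundle statement via Faber's pullback formulas to the products $\overline{\calM}_{g-h,P^c\sqcup\{r_1\}}\times\overline{\calM}_{h,P\sqcup\{r_2\}}$. But there are concrete gaps in the execution. First, your key computation of $Z_h^P\cdot\delta_h^P$ is internally inconsistent: you assert $\deg\psi_{q_1}=2(g-h)-2+|P^c|+1$ on $Y_h^P$, which would give $1-2(g-h)-|P^c|$, not the stated $2-2(g-h)-|P^c|$. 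The correct degree of the $\psi$-class at the moving point on a fiber of the universal curve over $\calM_{g',n'}$ is $2g'-2+n'$, here with $g'=g-h$ and $n'=|P^c|$; your extra $+1$ comes from counting $q_1$ itself among the marked points. Relatedly, your explanation of $Z_h^P\cdot K_i=1$ for $h>0$, $i\notin P$ (``$K_i$ restricting to a point class'') is off: $K_i$ itself restricts to degree zero on this family (the image under $\pi_i$ is a fixed pointed curve), and the $1$ arises from the collision term $\delta_0^{\{p_i,r_1\}}$ in the comparison $\psi_i=K_i+\sum\delta_0^Q$ applied on the first factor; one must also observe that $Z_h^P$ meets no $\delta_0^Q$ in $\oMgn$ at all (the bubble formed when $q_1$ hits $p_j$ carries two nodes, so is not a rational tail), which is what lets one trade $K_i$ for $\psi_i$ in the first place.

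The $Z_{irr}$ entries are where your sketch would actually fail. You never address $Z_{irr}\cdot\delta_2^\emptyset=0$, which needs its own excess computation since $Z_{irr}$ lies inside $\Delta_2^\emptyset$ (it vanishes because both attaching $\psi$'s restrict to fixed points on fixed components). More seriously, for $Z_{irr}\cdot\delta_{irr}$ the plan ``degree of the relevant $-\psi-\psi$ bundle along $E$'' is not yet a computation: the family lies in $\Delta_{irr}$ with the cycle $R\cup E$ containing \emph{two} nonseparating nodes (the attachments at the fixed $q_2$ and at the moving $q_3$), and the self-restriction of $\delta_{irr}$ is not simply $-\psi_{r_1}-\psi_{r_2}$. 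Faber's formula for the pullback of $\delta_{irr}$ to $\overline{\calM}_{g-1,n+2}$ carries the additional terms $\delta_{irr}+\sum\bigl(\delta_h^{Q\cup\{r_1\}}+\delta_h^{Q\cup\{r_2\}}\bigr)$, and in the paper the decisive contribution is $Y_{irr}\cdot\delta_1^{\{r_2\}}=-1$, extracted by a \emph{second} pullback to $\overline{\calM}_{g-2,n+2}\times\overline{\calM}_{1,2}$, together with a delicate evaluation of $\psi_{r_2}$ along the point moving on the elliptic factor. A single-node $-\psi-\psi$ count sees none of these terms and cannot reproduce the stated value; as you yourself warn, this is precisely the entry where an off-by-one would propagate into the $\delta_{irr}$ coefficient of the theta divisor, so it needed to be carried out, not gestured at. Finally, a small inconsistency: you first say $\Ell$ ``enters'' $\Delta_1^\emptyset$ when the elliptic factor becomes nodal---that degeneration is the $\delta_{irr}$ crossing; $\Ell$ lies entirely inside $\Delta_1^\emptyset$, and $\Ell\cdot\delta_1^\emptyset=-\deg\psi_{q_1}|_{\overline{\calM}_{1,1}}=-1/24$ is pure excess, as you correctly reclassify it only in your closing paragraph.
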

\begin{proof} We first compute the intersections with the test curves $Z_i$. If $i\neq j$, then $\pi_j(Z_i)=\{(C,p_j)\}$ is a single point in $\calM_{g,1}$, so $K_j$ restricts to a trivial line bundle on $Z_i$ and $Z_i\cdot K_j=0$. On the other hand, $\pi_i(Z_i)$ is the fiber $\{(C,p_i)\mid p_i\in C\}$ of $\calM_{g,1}$ over $C\in \calM_g$, so the bundle $K_i$ restricts to the cotangent bundle of $C$, which has degree $2g-2$. The curves parametrized by $Z_i$ are smooth except when $p_i=p_j$ for some $j\neq i$, in which case the two marked points lie on a rational tail. This gives the intersection numbers with the boundary divisors. Finally, $\pi(Z_i)$ is a point, so the Hodge bundle is trivial on $Z_i$ and $Z_i\cdot \lambda_1=0$.

The remaining test curves are all supported on the boundary, so to compute the intersection numbers we use the technique of \cite{faberalgorithms}. The boundary divisor $\Delta_h^P$ is the image of the product $\overline{\calM}_{g-h,P^c\sqcup\{r_1\}}\times \overline{\calM}_{h,P\sqcup\{r_2\}}$ under the map identifying $r_1$ and $r_2$. We compute the intersection numbers with the test curves by pulling back to this product. We denote by $pr_1$ and $pr_2$ the projection maps to the two components. The class $\psi_i$ on $\overline{\calM}_{g,n}$ pulls back to either $pr_1^*\psi_i$ if $i\in P^c$ or $pr_2^*\psi_i$ if $i\in P$. According to \cite{faberalgorithms}, the pullback of the divisor class $\delta_h^P$ to the product $\overline{\calM}_{g-h,P^c\sqcup\{q_1\}}\times \overline{\calM}_{h,P\sqcup\{q_2\}}$ is either $-pr_1^*\psi_{q_1}-pr_2^*\psi_{q_2}+pr_2^*\delta_{g-2h}^{I\sqcup\{q_2\}}$ if $g-2h\geq 0 $ and $P=I$ or $-pr_1^*\psi_{q_1}-pr_2^*\psi_{q_2}$ otherwise.

To compute $Z_h^P\cdot K_i$ for $h>0$, we note that the family $Z_h^P$ does not parametrize any curves with rational tails, so by (\ref{equ:Kpsi}) we have $Z_h^P\cdot K_i=Z_h^P\cdot\psi_i$. The curve $Z_h^P$ pulls back to $Y_h^P\times pt$ on the product, therefore $Z_h^P\cdot\psi_i=0$ if $i\in P$. If $i\in P^c$, then passing to the first factor in the product we have
$$
Z_h^P\cdot\psi_i=Y_h^P\cdot \psi_i=Y_h^P\cdot \left(K_i+\displaystyle\sum_{i\in Q\subseteq P^c\sqcup\{r_1\}}\delta_0^Q\right)=1,
$$
since $Y_h^P\cdot K_i=0$ as above, and the only boundary divisor in the sum that intersects $Y_h^P$ is $\delta_0^{\{p_i,r_1\}}$.

For the intersection numbers $Z_0^P\cdot K_i$, we note that the projection $\pi_i$ collapses rational tails, so the image $\pi_i(Z_0^P)$ is the point $(C_2,p_i)\in\overline{\calM}_{g,1}$ if $i\in P^c$, or the curve $\{(C_2,q_1)\mid q_1\in C_2\}$ if $i\in P$. Therefore, $Z_0^P\cdot K_i$ is zero in the first case and $2g-2$ in the second.

The intersections of the test curves $Z_h^P$ with the boundary divisors correspond to the possible degenerations of the parameterized curves. All of the non-empty intersections are transverse and equal to one, except that the curve $Z_h^P$ lies on the divisor $\Delta_h^P$. Therefore, we again restrict to the first factor and obtain
$$
Z_h^P\cdot\delta_h^P=-Y_h^P\cdot\psi_{r_1}=-Y_h^P\cdot\left(K_{r_1}+\displaystyle\sum_{Q\subseteq P^c, Q\neq\emptyset}\delta_0^{Q\sqcup\{r_1\}}\right)=
$$
$$
=-Y_h^P\cdot K_{r_1}-\displaystyle\sum_{k\in P^c}Y_h^P\cdot\delta_0^{\{p_k,r_1\}}=-(2(g-h)-2)-|P^c|.
$$
Finally, the Hodge bundle on $Z_h^P$ is trivial, so $Z_h^P\cdot \lambda_1=0$.

The curve $\Ell$ lies in the divisor $\Delta_1^{\emptyset}$ and intersects $\Delta_{irr}$. The corresponding intersection numbers, as well as $\Ell\cdot\lambda_1$, were computed by Wolpert in \cite{wolperthomology} (note, however, that for us $\delta_1^\emptyset$ and $\Ell$ are stacky, so the intersection numbers differ, see also \cite[Lemma 4.2]{muller}). Finally, for any $i$ the curve $\pi_i(\Ell)$ is a fixed curve with a fixed marked point and an attached varying elliptic tail, so $\Ell\cdot K_i=0$.

The curve $Z_{irr}$ lies in the boundary divisors $\Delta_2^{\emptyset}$ and $\Delta_{irr}$, intersects $\Delta_1^{\emptyset}$ at one point, and does not intersect the other boundary divisors.
To compute $Z_{irr}\cdot \delta_2^{\emptyset}$, we pull back to $\overline{\calM}_{g-2,n+1}\times \overline{\calM}_{2,1}$ as above. The divisor class $\delta_2^{\emptyset}$ pulls back to $-pr_1^* \psi_{r_1}-pr_2^*\psi_{r_2}$, and on the pullback of $Z_{irr}$ both $r_1$ and $r_2$ are fixed points on fixed components, so $Z\cdot \delta_2^{\emptyset}=0$.

To calculate the intersection with $\delta_{irr}$, we view $\Delta_{irr}$ as the result of gluing the last two marked points on $\overline{\calM}_{g-1,n+2}$, which we call $r_1$ and $r_2$. According to \cite{faberalgorithms}, the pullback of $\delta_{irr}$ to itself is equal to
$$
-\psi_{r_1}-\psi_{r_2}+\delta_{irr}+\displaystyle\sum_{h=0}^{g-1}\displaystyle\sum_{1\in Q\subseteq I; (h,Q)\ne (g-1,I)}
 \left( \delta_h^{Q\cup \lbrace r_1\rbrace}+\delta_h^{Q\cup \lbrace r_2\rbrace}\right).
$$
The pullback $Y_{irr}$ of the curve $Z_{irr}$ to $\overline{\calM}_{g-1,n+2}$ consists of the fixed genus $g-2$ curve $(C_1,p_1,\ldots,p_n,q_1)$ containing the first $n$ marked points, an elliptic curve $(E,q_2,r_2)$ containing the last varying marked point $r_2$, and a rational tail connecting $q_1$ and $q_2$, and containing $r_1$. Therefore, the curve $Y_{irr}$ lies in the boundary divisors $\delta_1^{\{r_2\}}=\delta_{g-2}^{I\cup \{r_1\}}$ and $\delta_1^{\{r_1,r_2\}}=\delta_{g-2}^I$, intersects at one point the boundary divisor $\delta_1^{\emptyset}=\delta_{g-2}^{I\cup\{r_1,r_2\}}$ (when $r_2$ hits $q_2$ it moves off of $E$ onto a second rational bridge), and does not intersect the other boundary divisors. Also, $Y_{irr}$ has zero intersection with $\psi_{r_1}$ but may have non-trivial intersection with $\psi_{r_2}$.

Looking at the formula above, we are only interested in $Y_{irr}\cdot \psi_{r_2}$ and $Y_{irr}\cdot \delta_1^{\{r_2\}}$. To compute $Y_{irr}\cdot \delta_{1}^{\{r_2\}}$, we pull back a second time to $\overline{\calM}_{g-2,n+2}\times\overline{\calM}_{1,2}$. Let $t_1$ and $t_2$ denote the points of attachment, then $\delta_1^{\{r_2\}}$ pulls back to $-pr_1^*\psi_{t_1}-pr_2^*\psi_{t_2}$. The curve $Y_{irr}$ splits at the point $q_2$ and pulls back to a fixed marked curve in the first factor and the elliptic curve $(E,r_2,t_2)$, with the point $r_2$ moving along $E$. The divisor $\psi_{t_2}=K_{t_2}+\delta_0^{\{r_2,t_2\}}$ integrates to one on the second factor, therefore
$$
Y\cdot \delta_1^{\{r_2\}}=-1.
$$
Similarly, we see that $Y_{irr}\cdot \psi_{r_2}=0$. Indeed,
$$
\psi_{r_2}=K_{r_2}+\displaystyle\sum_{Q\subset I\cup\{r_1\},Q\neq \emptyset} \delta_0^{Q\cup \{r_2\}},
$$
none of these boundary divisors intersect $Y_{irr}$, and $K_{r_2}$ integrates to zero on $Y_{irr}$ because it is an elliptic curve. Putting all this together, we get that $Z_{irr}\cdot\delta_{irr}=-1$.

Finally, the Hodge bundle on $Z_{irr}$ is trivial, so $Z_{irr}\cdot \lambda_1=0$, and $Z_{irr}\cdot K_i=0$ because the marked points are all fixed on a fixed component.
\end{proof}

A straightforward computation (noting that the matrix of intersections above is close to being diagonal) shows that the matrix of intersections of our test curves with the chosen basis of $\Pic_\QQ(\oMgn)$ is non-degenerate, and we thus get
\begin{cor}\label{cor:complete}
The curve classes $Z_j$, $Z_l^Q$, $\Ell$ and $Z_{irr}$ generate over $\QQ$ the group $N_1(\oMgn)$ of numerical equivalence classes of curves on $\oMgn$.
\end{cor}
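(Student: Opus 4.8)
The statement is a full-rank statement about the intersection matrix, so I would first reframe it via duality. The intersection pairing descends to a perfect pairing $N_1(\oMgn)\times N^1(\oMgn)\to\QQ$, and there is a surjection $\Pic_\QQ(\oMgn)\twoheadrightarrow N^1(\oMgn)$ whose source is spanned by the chosen generators $K_i,\delta_h^P,\delta_{irr},\lambda_1$. Hence it suffices to show that the matrix $M$ whose rows are the test curves, whose columns are these generators, and whose entries are recorded in Proposition~\ref{prop:standardintersect}, has rank equal to the number of generators; equivalently, that no nontrivial linear combination $D=c_\lambda\lambda_1+c_{irr}\delta_{irr}+\sum_{h\ge1}c_{h,P}\delta_h^P+\sum_i a_iK_i+\sum_{|P|\ge2}b_P\delta_0^P$ pairs to zero with every test curve. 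From this full-column-rank fact the generators are numerically independent, so $\dim N^1=\dim N_1$ equals their number, and the test-curve functionals, having this rank, span $N_1$. The cases $g=1,2$ I would simply check by inspection, as indicated in the text.

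The heart of the argument is an elimination in a fixed order, organized around the near-diagonal block structure of $M$. The curves $Z_i$ and $Z_0^P$ annihilate $\lambda_1$, $\delta_{irr}$, and every $\delta_h^Q$ with $h\ge1$, while the curves $Z_h^Q$ with $h\ge1$ annihilate every $\delta_0^P$; this genus-type separation means the equations coming from $Z_i$ and $Z_0^P$ form a closed system in the unknowns $a_i,b_P$ alone. I would therefore first solve this closed system to get $a_i=b_P=0$ (the main obstacle, discussed below). With the $K_i$- and $\delta_0^P$-coefficients gone, the equation from each $Z_h^P$ ($h\ge1$) becomes $\sum_Q c_{h,Q}\,(Z_h^P\cdot\delta_h^Q)=0$; since $Z_h^P\cdot\delta_h^Q\neq0$ only for $Q=P$ or $Q=P\sqcup\{j\}$, with diagonal entry $Z_h^P\cdot\delta_h^P=2-2(g-h)-|P^c|\neq0$, this system is triangular under inclusion (order the $\delta_h^Q$ of a fixed genus type by decreasing $|Q|$) and forces all $c_{h,P}=0$. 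Finally $Z_{irr}$ gives $-c_{irr}+c_{1,\emptyset}=0$, hence $c_{irr}=0$, and $\Ell$ gives $\tfrac1{24}c_\lambda=0$, hence $c_\lambda=0$.

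The one step that resists a single global triangularization, and which I expect to be the main obstacle, is the closed $(a_i,b_P)$-system, i.e.\ the non-degeneracy of the square block $D$ with rows $Z_i,Z_0^P$ and columns $K_i,\delta_0^P$. The couplings $Z_i\cdot\delta_0^{\{i,j\}}=1$ and $Z_0^P\cdot K_i=2g-2$ ($i\in P$) obstruct any reordering into triangular form. To handle it I would use that the $K$-diagonal subblock $Z_i\cdot K_j=(2g-2)\,\delta_{ij}$ is an invertible scalar matrix and pass to the Schur complement: writing $D=\left(\begin{smallmatrix}E&F\\ G&H\end{smallmatrix}\right)$ with $E=(2g-2)I$, one has $\det D=\det E\cdot\det\!\big(H-\tfrac1{2g-2}GF\big)$. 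Here $H$ is the block $Z_0^P\cdot\delta_0^Q$, triangular under inclusion with nonzero diagonal $2-2g-|P^c|$, while the correction $\tfrac1{2g-2}GF$ is supported only on the columns $Q$ with $|Q|=2$ and is of strictly lower order under the inclusion ordering; thus the Schur complement remains triangular with nonzero diagonal and is invertible. (Equivalently, one can run a downward induction on $|P|$ combining the relations from $Z_i$ and $Z_0^P$.) This gives $a_i=b_P=0$, completing the elimination and hence the proof that the test curves generate $N_1(\oMgn)$.
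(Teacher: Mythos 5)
Your framing and most of the elimination are sound, and the strategy is the same as the paper's (whose proof is just the one-line assertion that the intersection matrix of Proposition~\ref{prop:standardintersect} is close to diagonal, hence non-degenerate): the duality reduction, the closed $(a_i,b_P)$-system, the triangular treatment of the $h\geq 1$ block, and the final use of $Z_{irr}$ and $\Ell$ are all correct. But the step you yourself single out as the main obstacle contains a genuine error. The Schur complement $S=H-\tfrac{1}{2g-2}GF$ is \emph{not} triangular under any ordering refining inclusion: one computes $\bigl(\tfrac{1}{2g-2}GF\bigr)_{P,Q}=|P\cap Q|$ for $|Q|=2$ (and $0$ otherwise), so $S$ has the nonzero entries $-|P\cap Q|$ at every pair with $|Q|=2$ and $P\cap Q\neq\emptyset$ --- in particular at pairs with $Q\subsetneq P$ and at incomparable pairs such as $P=\{1,2,3\}$, $Q=\{3,4\}$ --- whereas the off-diagonal entries of $H$ sit at $P\subsetneq Q$ (namely $Q=P\sqcup\{j\}$). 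Nonzero entries thus occur on both sides of the diagonal for any linear extension of inclusion, so ``triangular with nonzero diagonal'' is false and proves nothing here. The parenthetical fallback does not close the argument either: downward induction on $|P|$ through the $Z_0^P$-relations only \emph{expresses} each $b_P$ in terms of the $a_i$; no $b_P$ is forced to vanish until the $Z_i$-relations are brought in, and those form a non-triangular $n\times n$ system that still needs an argument.

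The conclusion is nevertheless true, and the repair is a short explicit computation. Since $H$ is triangular with nonzero diagonal, the $Z_0^P$-relations determine $b$ uniquely from $a$, and one checks directly that $b_P=\frac{(2g-2)A_P+A_I}{2g-1}$, where $A_P=\sum_{i\in P}a_i$, satisfies all of them (the coefficients of $A_P$ and of $A_I$ each cancel identically). Substituting into the $Z_i$-relation $(2g-2)a_i+\sum_{j\neq i}b_{\{i,j\}}=0$ and clearing denominators gives $(2g-2)(2g+n-3)\,a_i+(2g+n-3)\,A_I=0$, hence $(2g-2)a_i=-A_I$ for every $i$ since $2g+n-3>0$ for $g\geq 3$; summing over $i$ yields $(2g-2+n)A_I=0$, so $A_I=0$, then $a_i=0$ and $b_P=0$. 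Equivalently, the reduced system for $a$ has matrix $(2g-2)I+J$ with $J$ the all-ones matrix, whose eigenvalues $2g-2$ and $2g-2+n$ are nonzero. With this replacement for the Schur-complement step, your elimination is complete and establishes the non-degeneracy the paper asserts.
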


\section{The class of the theta divisor}

In this section, we compute the intersection numbers of the test curves defined in the previous section with the pullbacks $[s_\ud^*\Theta]$ and $[s_\ud^*T]$ of the theta divisors, and then prove the main theorems.
\begin{prop} \label{prop:thetaintersect}
For $\deg\ud=0$, we have
$$
Z_i\cdot[s_\ud^*T]=d_i^2 g, \ Z_h^P\cdot[s_\ud^*T]=d_P^2(g-h).
$$
For $\deg\ud=g-1$, we have
$$
Z_i\cdot[s_\ud^*\Theta]=d_i^2 g, \ Z_h^P\cdot[s_\ud^*\Theta]=(d_P-h)^2(g-h)
$$
(we compute the intersections with $\Ell$ and $Z_{irr}$ separately).
\end{prop}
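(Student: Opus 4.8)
The plan is to compute each intersection number by restricting the pullback of the theta divisor to the relevant test curve and identifying the resulting line bundle. The key observation is that each test curve $Z_i$ or $Z_h^P$ parametrizes a family of pointed curves that, under the section $s_\ud$, determines a family of points in the universal Jacobian; pulling back $T$ (resp.\ $\Theta$) along this family gives a line bundle on the test curve whose degree is the intersection number we seek.

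First I would handle the curves $Z_i$, where a single fixed curve $C$ carries a moving point $p_i$ while the others stay fixed. The section $s_\ud$ sends $(C,p_1,\ldots,p_n)$ to $\sum_j d_j p_j \in \Pic(C)$, so as $p_i$ varies over $C$ the image traces out the composite of the Abel--Jacobi map $C \to \Jac(C)$ with multiplication by $d_i$ and a translation by the fixed class $\sum_{j\neq i} d_j p_j$. The pullback of the theta divisor under this map is, up to translation, $d_i^*(\Theta)$, and its degree on $C$ is governed by the self-intersection of the theta divisor under the degree-$d_i$ multiplication. Concretely, translation does not change the numerical class, and the multiplication-by-$d_i$ map scales the pullback of a symmetric principal polarization by $d_i^2$; the degree of the principal polarization restricted along the Abel--Jacobi image of $C$ equals $g$ (this is the standard fact that $C\cdot \Theta = g$ for the Abel--Jacobi embedded curve). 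This yields $Z_i\cdot[s_\ud^*T] = d_i^2 g$ in the degree-$0$ case, and the same computation, using that $\Theta$ in degree $g-1$ is again a symmetric polarization up to translation, gives $d_i^2 g$ in the degree $g-1$ case.

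Next I would treat the curves $Z_h^P$. Here the ambient curve degenerates to a nodal curve $C_1 \cup C_2$ of compact type, and since the Jacobian of a compact-type curve splits as $\Jac(C_1)\times\Jac(C_2)$ with the theta divisor decomposing accordingly, the section $s_\ud$ factors through the two Jacobian factors. On the moving factor $C_1$ (of genus $g-h$), the relevant point is the combination $d_P$ times the moving point $q_1$ plus fixed contributions, so the same multiplication-and-translation analysis as for $Z_i$ applies, now with coefficient $d_P$ and with the relevant Abel--Jacobi degree equal to $g-h$. This produces $Z_h^P\cdot[s_\ud^*T] = d_P^2(g-h)$ for degree $0$. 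For the degree $g-1$ case the key subtlety is the shift by the Riemann constant: the theta divisor $\Theta$ lives in degree $g-1$, and its restriction to the genus $(g-h)$ factor sees an effective degree that is shifted by $h$ (the genus of the other component), so the coefficient $d_P$ is replaced by $d_P - h$, giving $(d_P-h)^2(g-h)$.

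The main obstacle I expect is bookkeeping the translation and the Riemann-constant shift correctly in the degree $g-1$ case, i.e.\ verifying that the effective class whose self-intersection computes the degree is $d_P - h$ rather than $d_P$. This requires carefully tracking how the universal theta divisor of degree $g-1$ restricts to a Jacobian factor of a compact-type degeneration: the decomposition of $\Pic^{g-1}$ of the reducible curve into bidegrees, and the fact that the theta divisor is supported on effective classes, forces the degree on the genus-$(g-h)$ piece to be measured relative to $g-h-1$, which is what introduces the $-h$ correction. I would make this precise using the behavior of the Riemann theta function under the Abel--Jacobi map and its transformation under translation by a half-period, rather than through explicit local coordinates, so that the argument works uniformly over the boundary. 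The intersections with $\Ell$ and $Z_{irr}$ involve torus-rank-one degenerations of the abelian variety and are therefore deferred to a separate computation, as the statement indicates.
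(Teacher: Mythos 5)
Your proposal is correct and takes essentially the same route as the paper: you restrict $s_\ud$ to each test curve, recognize it as an Abel--Jacobi map composed with multiplication by $d_i$ (resp.\ $d_P$) and a translation, use $C\cdot\Theta=g$ together with the product decomposition of the theta divisor on $\Jac(C_1)\times\Jac(C_2)$ for compact-type degenerations, and in the degree $g-1$ case obtain the coefficient $d_P-h$ from the bidegree convention identifying the limit of $\Pic^{g-1}(C)$ with $\Pic^{g-h-1}(C_1)\times\Pic^{h-1}(C_2)$, exactly as in the paper's formulas (\ref{eq:AbelJacobi0}) and (\ref{eqn:AbelJacobig-1}). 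The only cosmetic difference is that the paper phrases the bidegree shift via the limit of the Abel--Jacobi basepoint at the node rather than via the Riemann constant, but the computation is the same.
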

\begin{proof}
We compute these intersection numbers by noting that the restrictions of $s_\ud$ to the test curves can be understood as Abel--Jacobi embeddings. We first compute, for $\deg\ud=0$, the intersections with $[s_\ud^*T]$.

Since $\pi(Z_i)=C$ is a single point in $\calM_g$, the image $s_\ud(Z_i)$ lies inside $\Pic^0(C)$. The restriction of $s_\ud$ to $Z_i$ is therefore the composition of an Abel--Jacobi embedding $Z_i\rightarrow\Pic^0(C)$  and a multiplication by $d_i$ map on $\Pic^0(C)$. The theta function restricted to the Abel--Jacobi image of the curve has degree $g$, while the pullback under the multiplication has degree $d_i^2$ on divisors. Therefore we get $Z_i\cdot[s_\ud^* T]=d_i^2 g$.

Similarly, the Jacobian variety of any curve parameterized by $Z_h^P$ is $\Pic^0(C_1)\times\Pic^0(C_2)$. The limit of the Abel--Jacobi embedding is a more delicate issue. Indeed, note that the Abel-Jacobi mapping is naturally an embedding $C\to\Pic^1(C)$. Thus to have a map $C\to\Pic^0(C)$, we need to choose a basepoint for the embedding. In a family of curves degenerating to some $C_1\cup C_2$ with one node, the limit of the chosen base point must lie on both $C_1$ and $C_2$, for the limit of the Abel--Jacobi embedding to be well-defined. Therefore it must be the point $q_1\in C_1$, which is identified with $q_2\in C_2$ to form the node. For the case of $\deg\ud=0$, we thus have in the limit
\begin{equation}\label{eq:AbelJacobi0}
s_\ud((C_1,p_{j_1},\ldots,p_{j_{n-m}},q_1),(C_2,p_{i_1},\ldots,p_{i_m},q_2))=
\end{equation}
$$
=\left(\displaystyle\sum_{j\in P^c}d_j p_j+d_P q_1,\displaystyle\sum_{i\in P}d_i p_i-d_{P}q_2\right)\in \Pic^0(C_1)\times\Pic^0(C_2),
$$
We recall that the theta function on a decomposable abelian variety is the product of the theta functions on the two factors. The point $q_1$ varies along the curve $C_1$ while $q_2$ is fixed, hence the second term is a constant map. The first term is the composition of the Abel--Jacobi embedding of $C_1$ with a multiplication by $d_P$, and thus has degree $d_P^2(g-h)$.

%To compute $[s_\ud^*T]\cdot \Ell$, similarly to above the theta function restricts to the product of the theta constant on the varying elliptic curve, and the constant that is the value of the theta function at a constant point on the Jacobian of a constant genus $g-1$ curve. Since the theta divisor is trivialized along the zero section, the degree of the theta constant is still zero on it, and the intersection number is zero.
%Finally, to show that $[s_\ud^*T]\cdot Z_\lambda=0$, we note the following commutative diagram
%$$
%\xymatrix{
% \calM_{g,n}\ar[r]^{s_\ud}\ar[d]&\calX_g\ar[d]\\
% \calM_g\ar[r]\ar[r]&\calA_g
%}
%$$
%Since the class $\lambda_1$ on $\calM_{g,n}$ is a pullback from $\calA_g$, and $T$ is trivialized along the zero section $0:\calA_g\to\calX_g$ (note that there would be no analog of this for $\deg\ud=g-1$, as we won't have a map to $\calX_g$), the pullback $s_\ud^*T$ has a zero coefficient in front of $\lambda$

\smallskip

The computations for the pullback $[s_\ud^*\Theta]$, for the case $\deg\ud=g-1$, are similar.
First, the intersections with $Z_i$ are the same as for $[s_\ud^*T]$, as the degree $0$ and $g-1$ Picard varieties of a curve with rational tails are isomorphic.

The intersection numbers of $Z_h^P$ with $[s^*_{\ud}\Theta]$ are not the same, because the limit of the Abel--Jacobi map in degree $g-1$ is different. As a smooth curve $C$ degenerates to a nodal curve $C_1\cup C_2$, the Jacobian variety $\Pic^{g-1}(C)$ becomes  $\Pic^{g-h-1}(C_1)\times\Pic^{h-1}(C_2)$. Therefore,
\begin{equation}\label{eqn:AbelJacobig-1}
s_\ud((C_1,p_{j_1},\ldots,p_{j_{n-m}},q_1),(C_2,p_{i_1},\ldots,p_{i_m},q_2))=
\end{equation}
$$  =\left(\displaystyle\sum_{j\in P^c}d_j p_j+(d_P-h) q_1,\displaystyle\sum_{i\in P}d_i p_i-(d_{P}-h+1)q_2\right)
$$
$$
\in \Pic^{g-h-1}(C_1)\times\Pic^{h-1}(C_2),
$$
and by the same reasoning we obtain $Z_h^P\cdot[s_\ud^*\Theta]=(d_P-h)^2 (g-h)$.
\end{proof}

We are now ready to prove our main result.

\begin{thm} \label{thm:main}
For $\deg \ud=0$, the class $[s_\ud^*T]\in \Pic_\QQ(\oMgn)$ of the pullback of the universal symmetric theta divisor trivialized along the zero section is equal to
\begin{equation}\label{eq:formulaforT}
[s_\ud^*T]=0\cdot \lambda_1+0\cdot\delta_{irr}+\frac{1}{2}\displaystyle\sum_{i=1}^n d_i^2 K_i-\frac{1}{2}\displaystyle\sum_{P\subseteq I}
\left(d_P^2-\displaystyle\sum_{i\in P}d_i^2\right)\delta_0^P-\frac{1}{2}
\!\!\!\!\!\sum_{\ \ h> 0,P\subseteq I}\!\!\!\!\! d_P^2
\delta_h^P.
\end{equation}
We denote $d_P=\sum_{i\in P}d_i$, and the last sum includes every boundary divisor class exactly once.

For $\deg \ud=g-1$, the class $[s_\ud^*\Theta]\in \Pic_\QQ(\oMgn)$ of the pullback of the universal theta divisor is equal to
\begin{equation}\label{eq:formulaforTheta}
[s_\ud^*\Theta]=-\lambda_1+\frac18\delta_{irr}+\frac{1}{2}\displaystyle\sum_{i=1}^n d_i(d_i+1) K_i-\frac{1}{2}\displaystyle\sum_{P\subseteq I}\left(d_P^2-\displaystyle\sum_{i\in P}d_i^2\right)\delta_0^P-\\
\end{equation}
$$
-\frac{1}{2}\displaystyle\sum_{h> 0,P\subseteq I} (d_P-h)(d_P-h+1)
\delta_h^P.
$$
Moreover, the class of the divisor $\overline{D}_{\ud}$ considered by M\"uller \cite{muller} is expressed in terms of $[s^*_{\ud}\Theta]$ as
\begin{equation}\label{eq:ThetaandD}
 [\overline{D}_{\ud}]=[s_\ud^*\Theta]-\displaystyle\sum_{P\subset P_+,d_P<h}(h-d_P)\delta_h^P-\delta_{irr}/8,
\end{equation}
where $P_+:=\{i\in I \mid d_i>0\}$, thus reproving the formula in Theorem~\ref{thm:muller}.
\end{thm}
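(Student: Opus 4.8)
The plan is to pin down both divisor classes by their intersection numbers against the test curves and then to compare the resulting expression for $\Theta$ with the effective divisor $\overline{D}_{\ud}$. By Corollary~\ref{cor:complete} the curves $Z_i$, $Z_h^P$, $\Ell$ and $Z_{irr}$ span $N_1(\oMgn)$, and their intersection matrix against the basis $K_i,\delta_h^P,\delta_{irr},\lambda_1$ of $\Pic_\QQ(\oMgn)$ is non-degenerate, so a class is determined by these pairings. Hence to establish (\ref{eq:formulaforT}) and (\ref{eq:formulaforTheta}) I would verify that each proposed right-hand side meets every test curve in the same number as the class it computes. Against $Z_i$ and $Z_h^P$ the target values are exactly Proposition~\ref{prop:thetaintersect}, and checking them is a direct expansion using Proposition~\ref{prop:standardintersect}: for instance pairing the right side of (\ref{eq:formulaforT}) with $Z_i$ and using $\deg\ud=0$ collapses the $\delta_0^{\{i,j\}}$ terms to give $d_i^2(g-1)+d_i^2=d_i^2 g$, as required. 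This matching is routine for all of the $Z_i$ and $Z_h^P$ pairings in both the degree $0$ and degree $g-1$ cases.

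The substance is in the coefficients of $\lambda_1$ and $\delta_{irr}$, which are invisible to $Z_i$ and $Z_h^P$ (these pair to zero with both classes) and must be read off from the pairings left unevaluated in Proposition~\ref{prop:thetaintersect}, namely $\Ell\cdot[s_\ud^*T]$, $Z_{irr}\cdot[s_\ud^*T]$ and $\Ell\cdot[s_\ud^*\Theta]$, $Z_{irr}\cdot[s_\ud^*\Theta]$. For the degree $0$ class $T$ I expect both to vanish: along $\Ell$ the elliptic tail carries no marked points, so by the limit of (\ref{eq:AbelJacobi0}) its Jacobian factor maps to the origin, and since $T$ is \emph{trivialized along the zero section} the restriction of $s_\ud^*T$ to $\Ell$ is a trivial bundle; the analogous analysis over $Z_{irr}$ gives $0$, forcing the vanishing coefficients in (\ref{eq:formulaforT}), consistently with (\ref{equ:0section}). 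For $\Theta$ the same numbers are genuinely nonzero, and being independent of $\ud$ they must come out to $\Ell\cdot[s_\ud^*\Theta]=\tfrac{1}{48}$ and $Z_{irr}\cdot[s_\ud^*\Theta]=-\tfrac18$, which (solving the two linear equations coming from the $\Ell$- and $Z_{irr}$-rows of Proposition~\ref{prop:standardintersect}) yield the coefficients $-\lambda_1$ and $\tfrac18\delta_{irr}$. The crucial difference from $T$ is that in degree $g-1$ there is no trivialization: the limiting image (\ref{eqn:AbelJacobig-1}) places the elliptic factor at the origin, which lies on the elliptic theta divisor, so $\Ell$ is \emph{contained} in $s_\ud^*\Theta$ and the intersection number is not a count of points but the degree of $\calO(\Theta)$ along this locus, a Hodge-theoretic quantity governed by the degeneration of the genus one theta function over $\overline{\calM}_{1,1}$. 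Carrying out this degeneration --- for $\Ell$ on an elliptic tail, and for $Z_{irr}$ on a torus-rank-one, non-compact-type degeneration where $s_\ud$ is not literally defined --- is the step I expect to require the most care, since it demands controlling the Riemann theta function near the boundary.

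It remains to deduce (\ref{eq:ThetaandD}) and hence Theorem~\ref{thm:muller}. Over the open stratum $\calM_{g,n}$, Riemann's theorem identifies $\Theta$ with $\{h^0\geq 1\}$, so $D_{\ud}=s_\ud^{-1}(\Theta)$ and $\overline{D}_{\ud}$ agrees with $s_\ud^*\Theta$ there; the difference is therefore a boundary class, which I would compute stratum by stratum. Over a compact-type divisor $\Delta_h^P$ the limit line bundle splits with the multidegree of (\ref{eqn:AbelJacobig-1}), and exactly when the twisted degree on the component carrying the positive points is negative --- the range $P\subset P_+$, $d_P<h$ --- every section must vanish identically on that component, so $\Delta_h^P$ appears as a base component of $s_\ud^*\Theta$ with multiplicity $h-d_P$ that is absent from the honest closure $\overline{D}_{\ud}$; this yields the correction $-\sum_{P\subset P_+,\,d_P<h}(h-d_P)\delta_h^P$. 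Over $\Delta_{irr}$ the extended class $[s_\ud^*\Theta]$ carries the coefficient $\tfrac18$ found above, whereas $\overline{D}_{\ud}$ meets $\Delta_{irr}$ with coefficient $0$, giving the final term $-\delta_{irr}/8$. Subtracting both corrections from (\ref{eq:formulaforTheta}) reproduces M\"uller's formula. The one bookkeeping point to settle is that Theorem~\ref{thm:muller} and (\ref{eq:ThetaandD}) use slightly different conventions for $P_+$ (inclusion of indices with $d_i=0$); I would verify that, once each boundary class $\delta_h^P=\delta_{g-h}^{P^c}$ is counted once as fixed in Section~2, the two correction sums assign the same coefficient to every boundary divisor.
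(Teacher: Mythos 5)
Your overall framework---pin the class down by pairing with the test curves of Corollary~\ref{cor:complete}, read off the $K_i$ and $\delta_h^P$ coefficients from Propositions~\ref{prop:standardintersect} and~\ref{prop:thetaintersect}, and get the $\lambda_1$ and $\delta_{irr}$ coefficients of $[s_\ud^*T]$ from the constancy of the trivialized theta function along $\Ell$ and $Z_{irr}$---matches the paper, and your sample check $d_i^2(g-1)+d_i^2=d_i^2g$ is correct. The genuine gap is in the degree $g-1$ case: you do not prove the two numbers $\Ell\cdot[s_\ud^*\Theta]=\tfrac1{48}$ and $Z_{irr}\cdot[s_\ud^*\Theta]=-\tfrac18$, you assert them (``they must come out to''), which is reverse-engineered from the formula being proved and hence circular; the phrase ``being independent of $\ud$'' constrains nothing. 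You yourself flag the required theta-degeneration analysis as the step needing the most care, but that analysis \emph{is} the proof of these coefficients, and for $Z_{irr}$ it takes place on a non-compact-type (torus rank one) degeneration where $s_\ud$ is not even defined a priori. The paper explicitly declines to compute $Z_{irr}\cdot[s_\ud^*\Theta]$ for exactly this reason and substitutes a different second equation: it first proves (\ref{eq:ThetaandD}) (the vanishing orders $h-d_P$ along $\Delta_h^P$ via the Riemann theta singularity theorem, and $1/8$ along $\Delta_{irr}$ via the Fourier--Jacobi expansion, citing Donagi), deduces $\overline{D}_\ud\cdot\Ell=0$ and hence $a+12b=\tfrac12$, and then determines $a=-1$ by an adjunction computation for the theta divisor in $\PPic^1$ over $\calA_2=\calM_2^{ct}$, using $5\lambda_1=\delta_1^\emptyset$ in genus two (with a stack-theoretic check that $s_1$ is unramified in codimension one). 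None of this genus-two input appears in your proposal, and without it your linear system for $a,b$ has no second independent, \emph{proved} equation.

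A second, related flaw is your derivation of the $-\delta_{irr}/8$ term in (\ref{eq:ThetaandD}): you argue that $[s_\ud^*\Theta]$ ``carries the coefficient $\tfrac18$'' while $\overline{D}_\ud$ ``meets $\Delta_{irr}$ with coefficient $0$.'' This conflates the coefficient of $\delta_{irr}$ in the basis expansion of a divisor class with the multiplicity of $\Delta_{irr}$ as a component of the effective divisor $s_\ud^*\Theta$; the two need not agree for an arbitrary divisor, and the vanishing of the $\delta_{irr}$-coefficient of $[\overline{D}_\ud]$ is part of M\"uller's theorem, i.e.\ part of the conclusion. What (\ref{eq:ThetaandD}) actually requires is the geometric fact that the theta function vanishes to generic order $\tfrac18$ along $\Delta_{irr}$, which is the Fourier--Jacobi computation the paper cites. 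Similarly, your base-component argument over $\Delta_h^P$ correctly gives \emph{containment} in $s_\ud^*\Theta$ (every section vanishes on the component with effective twisted bundle) but only asserts the multiplicity $h-d_P$; this needs the Riemann theta singularity theorem as in the paper. Your closing point about the two conventions for $P_+$ (strict versus weak inequality, and counting each $\delta_h^P=\delta_{g-h}^{P^c}$ once) is a legitimate bookkeeping issue worth the verification you propose, but it does not repair the two missing computations above.
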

\begin{proof}
According to Corollary \ref{cor:complete}, the curve classes $Z_j$, $Z_h^P$, $\Ell$, and $Z_{irr}$ generate $N_1(\oMgn)$. Moreover, the test curves $Z_j$ and $Z_h^P$ have zero intersection with the divisors $\lambda_1$ and $\delta_{irr}$. Therefore to calculate the coefficients of the divisors $K_i$ and $\delta_h^P$ in the theta divisors it suffices to intersect both sides of (\ref{eq:formulaforT}) and (\ref{eq:formulaforTheta}) with the test curves $Z_j$ and $Z_h^P$, using Propositions \ref{prop:standardintersect} and \ref{prop:thetaintersect}, and to verify that they are equal. This is a tedious but straightforward calculation.

Computing the remaining coefficients is somewhat more complicated, since the test curves $\Ell$ and $Z_{irr}$ parametrize curves of non-compact type, and we need to understand the degeneration of the Abel--Jacobi map to such curves.

We first compute the remaining coefficients in $[s^*_{\ud}T]$. To calculate the intersection number $\Ell\cdot [s^*_{\ud}T]$, note that a generic point of $\Ell$ is a smooth elliptic curve $C_1$ attached to a fixed genus $g-1$ curve $C_2$ with all the marked points on it. According to (\ref{eq:AbelJacobi0}), the limit of the Abel--Jacobi map at that point is $(0,\sum d_i p_i)\in \op{Pic}^0(C_1)\times \op{Pic}^0(C_2)$, where the second term is constant. The value of the theta function trivialized along the zero section at all such points is the constant number $\theta(\tau,\sum d_i p_i)/\theta(\tau,0)$, where $\tau$ is the period matrix of $C_2$, and we can choose the marked points $p_i$ so that it is non-zero. This number does not change as the elliptic curve degenerates, therefore, $\Ell\cdot [s^*_{\ud}T]=0$.

The calculation of $Z_{irr}\cdot [s^*_{\ud}T]$ is similar. Consider a reducible curve consisting of a smooth genus $g-2$ component $(C_1,p_1,\ldots,p_n,q_1)$ attached to a smooth genus two component $(C_2,q_2)$. The value of the theta function trivialized along the zero section at such a curve depends only on the position of the marked points on the first component. Any point in $Z_{irr}$ lies in a one-parameter family consisting of such curves, hence the theta function is constant on $Z_{irr}$ and $Z_{irr}\cdot [s^*_{\ud}T]=0$. Intersecting both sides of (\ref{eq:formulaforT}) with $\Ell$ and $Z_{irr}$ and using Proposition \ref{prop:standardintersect} shows that the coefficient of $\delta_{irr}$  in $[s^*_{\ud}T]$ is also zero.

Before finding the remaining coefficients of $[s^*_{\ud}\Theta]$, we first prove formula (\ref{eq:ThetaandD}). Over the smooth locus $\calM_{g,n}$ the classes $D_\ud$ and $s^*_\ud\Theta$ coincide by definition. However, the theta function vanishes identically on certain boundary components, so the divisor $s_\ud^*\Theta$ is reducible and contains some boundary components with multiplicities that we now compute.

Consider a boundary divisor $\Delta_h^P=\overline{\calM}_{g-h,n-m+1}\times\overline{\calM}_{h,m+1}$. The restriction of the theta function to a generic point of this divisor is given by formula (\ref{eqn:AbelJacobig-1}). It may happen that on one of the two components all marked points have non-negative weights. In this case the image of the divisor on that component lies entirely inside the theta divisor of the corresponding Jacobian. Assume without loss of generality that this happens on the second component. The order of vanishing of the theta function on such a boundary divisor is equal to $h-d_P$ by the Riemann theta singularity theorem, therefore to relate $[\overline{D}_{\ud}]$ and $[s^*_{\ud}\Theta]$ we need to subtract the corresponding multiple of $\delta_h^P$.

Along $\Delta_{irr}$, the generic vanishing order of the theta function is equal to $1/8$. This can be seen by looking at the Fourier--Jacobi expansion of the theta function, see~\cite{donagiscju} for the relevant computation (note that we are dealing with the actual universal theta function and not with the polarization on semiabelic varieties). Subtracting all the generic vanishing we get
$$
 [\overline{D}_{\ud}]=[s_\ud^*\Theta]-\displaystyle\sum_{P\subset P_+,d_P<h}(h-d_P)\delta_h^P-\delta_{irr}/8,
$$
proving formula (\ref{eq:ThetaandD}).

It remains to compute the coefficients of $\lambda_1$ and $\delta_{irr}$ in $[s^*_{\ud}\Theta]$. First, we intersect with the curve $\Ell$. Once all the generic vanishing is taken out, the restriction of $\overline {D}_\ud$ and of $s_\ud^*T$ to the curve $\Ell$ are the same (since on an elliptic curve $g-1=0$), and thus we have $[\overline{D}_{\ud}]\cdot \Ell=0$ (compare also to \cite[Lemma 4.2]{muller}). Denoting the coefficients of $\lambda_1$ and $\delta_{irr}$ in $[s^*_{\ud}\Theta]$ by $a$ and $b$, respectively, and using (\ref{eq:ThetaandD}) and the intersection numbers from Proposition \ref{prop:standardintersect}, we see that $a+12b=1/2$.

To finish the computation of the class $[s^*_{\ud}\Theta]$, we thus need to find one more relation. Intersecting with the test curve $Z_{irr}$ is tricky, as it requires having an explicit description of the behavior of the map $s_\ud$ on irreducible stable curves. This can be accomplished by a careful study of the Fourier-Jacobi expansion, but we take another approach. Consider the restriction of the class $[s^*_{\ud}\Theta]$ to the image of the gluing map $i:\calM_{2,1}^{ct}\times\overline{\calM}_{g-2,n+1}\to\overline{\calM}_{g,n}$. We note that on $\calM_2^{ct}$ (and thus also by pullback on $\calM_{2,1}^{ct}$) the class $\lambda_1$ is a boundary, namely $\lambda_1=\delta_1^{\emptyset}/5$ (see \cite{hamobook}, p. 171). Therefore, computing the pullback $i^*[s^*_{\ud}\Theta]$ gives us the one extra condition that we need. Using the pullback formulas described in \cite{faberalgorithms}, we see that
$$
i^*[s^*_{\ud}\Theta]=pr_1^*(a\lambda_1+\psi)+pr_2^*(\cdots),
$$
where the second term is not relevant for us.

To compute this class geometrically, we consider the pullback of the universal theta divisor $\theta\subset\PPic^1$ from the universal Picard variety over $\calM^{ct}_2=\calA_2$  (where $\theta$ is the locus of effective divisors) to $\calM^{ct}_{2,1}$, under the map $s_1(C,p):=p\in\Pic^1(C)$. The image of $s_1$ is exactly $\theta$, so we compute its pullback using the adjunction formula. Since we are dealing with stacks, we need to take automorphisms into account. The generic points of $\calM_{2,1}^{ct}$ and of $\PPic^1$ have trivial stabilizers. The moduli space $\calM_{2,1}^{ct}$ has two divisors whose generic points have non-trivial stabilizers, the locus of Weierstrass points and the boundary divisor $\delta_1^{\emptyset}$. On $\PPic^1$, the images of these two divisors under $s_1^*$ are the locus of $2$-torsion points and the subfamily of the universal family on $\calA_{1,1}$, the moduli space of products of elliptic curves, that is trivial on one of the factors. The generic stabilizer group both on the source and the target divisor is $\ZZ/2\ZZ$ in each of these two cases, so the map $s^*_{\ud}$ is unramified in codimension one, and therefore
$$
  K_{\calM^{ct}_{2,1}}=(K_{\PPic^1}+\theta)|_\theta.
$$
The canonical class of $\calM^{ct}_{2,1}$ is equal to $13\lambda_1-2\delta_1^{\emptyset}+\psi$ (see \cite[Thm.~7.15]{acgh2}), therefore the canonical class of $\calM^{ct}_{2,1}$ is $K_{\calM^{ct}}$ plus the first Chern class of the cotangent bundle of the fiber, which is $\psi$. 
The cotangent bundle to $\PPic^1$ is the extension of the pullback of the cotangent bundle of ${\calA_2}$ by the cotangent bundle of an individual $\Pic^1$; globally, this means that the canonical bundle on $\PPic^1$ is the sum of the pullback of $K_{\calA_2}$ (which is $3\lambda$) and the first Chern class of the Hodge bundle. Thus we have  $K_{\PPic^1}=4\lambda_1$, and finally obtain
$$
 \theta|_\theta=K_{\calM^{ct}_{2,1}}-K_{\PPic^1}|_\theta=\psi+13\lambda_1-2\delta_1^\emptyset-4\lambda_1=\psi+9\lambda_1-2\delta_1^\emptyset
 =-\lambda_1+\psi,
$$
where we use $5\lambda_1=\delta_1^\emptyset$ on $\calM_{2,1}$ for the last step. It follows that the coefficient $a$ of the class $[s^*_\ud\Theta]$ is equal to $-1$, completing the proof.
\end{proof}

\begin{proof}[Proof of Theorem~\ref{thm:0}]
The theorem directly follows from the formula for $[s^*_\ud T]$, restricted to the moduli space $\calM_{g,n}^{ct}$ of curves of compact type over which the universal family $\calX_g\rightarrow\calA_g$ extends, and from formula (\ref{equ:0section}).
\end{proof}

\medskip {\em Acknowledgment:}
The second author would like to thank Maksym Fedorchuk for a number of useful discussions.

\end{document}